\newcommand{\C}{\mathbb{C}}
\newcommand{\Z}{\mathbb{Z}}
\newcommand{\R}{\mathbb{R}}
\newcommand{\cro}{\textrm{cr}}
\newcommand{\floor}[1]{\left\lfloor #1 \right \rfloor}
\newcommand{\ceil}[1]{\left\lceil #1 \right \rceil}
\newtheorem{thm}{Theorem}
\newtheorem{prop}[thm]{Proposition}
\newtheorem{lem}[thm]{Lemma}
\newtheorem*{defn}{Definition}
\title{On the Crossing Number of Complete Graphs with an Uncrossed Hamiltonian Cycle}
\author{Daniel M. Kane}
\begin{document}

\maketitle

In \cite{guy}, Guy conjectured that the crossing number of the complete graph was given by:
$$
\cro(K_n) = Z(n) := \frac{1}{4}\floor{\frac{n}{2}}\floor{\frac{n-1}{2}}\floor{\frac{n-2}{2}}\floor{\frac{n-3}{2}}.
$$
Guy proved his conjecture for all $n\leq 10$. This was later extended by Pan and Richter in \cite{k11} to $n\leq 12$. For general $n$, the most that is known is that $0.8594 Z(n) \leq \cro(K_n) \leq Z(n)$. The lower bound was proved by de Klerk et al. in \cite{lowerbound}. The upper bound can be obtained in a number of ways, one of which we present here.

Draw the $K_n$ on the double cover of a disc, glued along the boundary, which is of course isomorphic to the sphere. Place the $n$ vertices around the boundary of the circle, labeled clockwise by elements of $\Z/n$. Let $S_1=\{1,2,\ldots, \floor{n/2}\},S_2=\{\floor{n/2}+1,\ldots,n\}$. For two vertices $x,y$ connect them by the line segment in the top disc if $x+y\in S_1$ and by the segment in the bottom disc in $x+y\in S_2$. For generic locations of the vertices, no three edges intersect at a point, and we claim that the number of crossings is given by $Z(n)$.

Note that a pair of edges $(u,v)$ and $(t,w)$ cross if and only if the corresponding cords of the circle cross and if $u+v$ and $t+w$ are either both in $S_1$ or both in $S_2$. Letting the four points $t,u,v,w$ when listed in clockwise order be given by the labels $x,x+a,x+a+b,x+a+b+c$ for integers $1\leq a,b,c\leq n$ with $a+b+c<n$. We note that of the three pairs of edges we could divide these four points into, only $\{(x,x+a+b),(x+a,x+a+b+c)\}$ has a chance of crossing. Note also that each pair of edges can be represented like this in exactly four ways by picking each possible vertex as $x$. We note that unless $a+c=n/2$ (in which case it is easy to verify that the two edges are drawn on opposite discs and thus don't cross) that exactly 2 of these representations have $a+c<n/2$. For such values of $a,b,c$, we wish to know the number of $x$ that give crossing edge pairs.

The above edges cross if and only if $2x+a+b$ and $(2x+a+b)+(a+c)$ either both lie in $S_1$ or both lie in $S_2$. Consider the average of the number of such $x$, and the number of such $x$ obtained when the roles of $a$ and $c$ are reversed. We claim that this is the number of $y\in\Z/n$ for which $y$ and $y+(a+c)$ either both lie in $S_1$ or both lie in $S_2$. For $n$ odd, this follows immediately from noting that $2x+a+b$ takes each value exactly once. Similarly, if $a+c$ is odd, each $y$ can be written in exactly two ways as either $2x+a+b$ or $2x+b+c$. Otherwise, it suffices to show that the number of solutions with odd $y$ equals the number of solutions with even $y$. If $n\equiv 2\pmod{4}$ this is true because $z\in S_1$ if and only if $z+n/2\in S_2$, thus the condition holds for $y$ if and only if it holds for $y+n/2$. If $n\equiv 0\pmod{4}$ and $a+c$ is even, this follows from noting that $z\in S_1$ if and only if $n/2+1-z\in S_1$, thus $y$ will work if and only if $n/2+1-(y+(a+c))$ does. It is easy to verify that the number of such $y$ is given by $\floor{n/2}-(a+c)+\ceil{n/2}-(a+c) = n-2(a+c)$. Thus the number of crossings is one half the sum over $1\leq a,b,c\leq n$ with $a+b+c<n$ and $a+c<n/2$ of $n-2(a+c)$. This equals:
\begin{align*}
\frac{1}{2}&\sum_{a+c\leq n/2} |\{1\leq b \leq n-1-(a+c)\}|(n-2(a+c))\\
& = \frac{1}{2}\sum_{a+c\leq n/2} (n-1-(a+c))(n-2(a+c))\\
& = \frac{1}{2}\sum_{s=2}^{\floor{n/2}}(s-1)(n-1-s)(n-2s)\\
& = Z(n).
\end{align*}

This construction of the upper bound has the interesting property that it contains a Hamiltonian cycle passing around the edge of the disc, none of whose edges have any crossings. For this special case of graph diagrams we establish the following lower bound on number of crossings:

We prove the following theorem:
\begin{thm}\label{mainThm}
If a $K_n$ is drawn in the plane in such a way that it has a hamiltonian cycle that does not cross any edges (including its own), then it has at least $n^4\left(\frac{1}{48} - \frac{1}{16 \pi^2} \right) + O(n^3)$ crossings.
\end{thm}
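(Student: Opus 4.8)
The plan is to turn the hypothesis into a combinatorial minimization and then bound a quadratic form. Since the Hamiltonian cycle $H$ is uncrossed it is a Jordan curve, so it divides the sphere into two discs, and because no edge crosses $H$ each of the remaining chords lies entirely in one disc. Label the vertices $1,\dots,n$ in the cyclic order in which $H$ visits them. Two chords lying in the same disc whose endpoints interleave around $H$ must cross an odd number of times, hence at least once, whereas chords in different discs cannot cross and the $n$ cycle edges interleave with nothing. Writing $\chi\in\{\pm1\}^{\binom n2}$ for the disc-assignment and $A$ for the adjacency matrix of the circle graph on all $\binom n2$ chords ($A_{ef}=1$ iff $e,f$ interleave), the number of crossings is at least the number of monochromatic interleaving pairs:
$$\cro \ge \tfrac12 P + \tfrac14\,\chi^{T}A\chi,\qquad P := \#\{\text{interleaving pairs}\}.$$
Exactly as in the opening discussion each $4$-subset of vertices contains a unique interleaving pair, so $P=\binom n4$.

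Next I would apply the spectral bound. As $\chi$ is a $\pm1$ vector, $\|\chi\|^2=\binom n2$ and $\chi^{T}A\chi\ge\lambda_{\min}(A)\binom n2$, giving
$$\cro \ge \tfrac12\binom n4 + \tfrac14\,\lambda_{\min}(A)\binom n2.$$
Since $\tfrac12\binom n4=\tfrac{n^4}{48}+O(n^3)$, the whole theorem reduces to the estimate $\lambda_{\min}(A)\ge -\tfrac{n^2}{2\pi^2}+O(n)$, whose main contribution $\tfrac14\cdot\bigl(-\tfrac{n^2}{2\pi^2}\bigr)\cdot\tfrac{n^2}{2}=-\tfrac{n^4}{16\pi^2}$ produces the second term, the $O(n)$ error feeding into the $O(n^3)$ of the statement.

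To compute the least eigenvalue I would use that $A$ commutes with the cyclic rotation of the labels and so block-diagonalizes over the characters $s\in\Z/n$ of that action. Placing the vertices at $x=i/n$ and rescaling by $n^2$ to reach the limiting integral operator, I would test it on the Fourier modes $\phi^{(p,q)}(\{x,y\})=e^{2\pi i(px+qy)}+e^{2\pi i(py+qx)}$; a direct evaluation of the interleaving integral gives
$$A\,\phi^{(p,q)} = \frac{-n^2}{2\pi^2 pq}\bigl(\phi^{(p,q)}-\phi^{(p+q,0)}\bigr)\,(1+o(1)).$$
The negative eigenvalues arise exactly when $pq>0$, and $-\tfrac{1}{2\pi^2 pq}$ is smallest when $pq=1$, i.e. $p=q=\pm1$ (the sectors $s=\pm2$), yielding $-\tfrac{n^2}{2\pi^2}$; the extremal mode depends only on the chord midpoint $x+y$ at frequency one. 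Reassuringly this is the continuous relaxation of the optimal $S_1/S_2$ assignment of the upper bound, which is precisely the square-wave in $i+j$, so the extremal mode is the expected one.

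The hard part is making this rigorous. The $\phi^{(p,q)}$ are not exact eigenvectors: inside each sector $s$ the operator couples every $\phi^{(p,s-p)}$ to the single degenerate one-endpoint mode $\phi^{(s,0)}=e^{2\pi i sx}+e^{2\pi i sy}$, whose image under $A$ is not of Fourier type. I would treat $A$ within each sector as a structured (essentially banded) matrix plus a finite-rank coupling and use eigenvalue interlacing to show that this coupling cannot push any eigenvalue below $-\tfrac{n^2}{2\pi^2}(1+o(1))$, while separately controlling in operator norm the discretization error between the $\binom n2\times\binom n2$ matrix and its integral-operator limit. Ruling out a stray eigenvalue in some sector more negative than the $s=\pm2$ value $-\tfrac{n^2}{2\pi^2}$ is the real crux, and is exactly where the constant $\tfrac{1}{16\pi^2}$ gets pinned down.
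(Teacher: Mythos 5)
Your opening reduction is sound and is exactly the paper's Lemma \ref{maxcutreductionlem}: with $P=\binom{n}{4}$ interleaving pairs, $\cro \ge \frac12\binom n4 + \frac14\chi^{T}A\chi$, and your identification of the dangerous mode (frequency one in the midpoint $x+y$, the continuous analogue of the $S_1/S_2$ square wave, with Rayleigh quotient $-\frac{n^2}{2\pi^2}$) is also correct. The gap is the next step: the estimate $\lambda_{\min}(A)\ge -\frac{n^2}{2\pi^2}+O(n)$ to which you reduce the theorem is \emph{false}, and the interlacing argument you sketch cannot rescue it, because the coupling of $\phi^{(1,1)}$ to the degenerate mode $\phi^{(2,0)}$ genuinely pushes the bottom of the spectrum below $-\frac{n^2}{2\pi^2}$. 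Concretely, pass to the continuous limit, where a vector $\chi$ corresponds to a symmetric $f$ on $(S^1)^2$ and $\frac{\chi^{T}A\chi}{\|\chi\|^2}=\frac{n^2}{2}\cdot\frac{Q(f)}{\|f\|_{L^2}^2}+o(n^2)$ with $Q(f)=\int f(w,y)\overline{f(x,z)}C$. The Fourier computation in the proof of Proposition \ref{relaxedProp} gives $Q(f)=-\frac{1}{\pi^2}\sum_{n,m\neq 0}\frac{|a_{n,m}-a_{n+m,0}|^2}{nm}$. Take $a_{1,1}=1$, $a_{2,0}=a_{0,2}=t$, all other coefficients zero. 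Then
$$
Q=-\frac{1}{\pi^2}\left((1-t)^2-\tfrac32 t^2\right),\qquad \|f\|^2=1+2t^2,
$$
where the term $-\frac32t^2$ comes from the ordered pairs $(n,2-n)$ with $n\notin\{0,1,2\}$ via $2\sum_{n\ge 3}\frac{1}{n(n-2)}=\frac32$. At $t=\frac{5-\sqrt{57}}{8}\approx -0.319$ the Rayleigh quotient is about $-\frac{1.319}{\pi^2}$, i.e.\ $\lambda_{\min}(A)\le -\frac{1.319\,n^2}{2\pi^2}+O(n)$. So the unweighted spectral relaxation $\chi^{T}A\chi\ge\lambda_{\min}(A)\binom n2$ necessarily loses at least a factor $1.319$ in the negative term and can only yield a constant strictly worse than $\frac{1}{48}-\frac{1}{16\pi^2}$.

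The underlying issue is that the bound you need is not an eigenvalue bound: it must use the constraint $|\chi_e|=1$ pointwise, not merely $\|\chi\|^2=\binom n2$. The paper's route is to prove the \emph{weighted} operator inequality \eqref{mainEqn}, $Q(f)\ge -\frac{2}{\pi^2}\int |f|^2\sin^2(\pi(x-y))$, i.e.\ positivity of $T+\frac{2}{\pi^2}M$ where $M$ is multiplication by $\sin^2(\pi(x-y))$. For $\pm1$-valued $f$ the right-hand side equals $-\frac{2}{\pi^2}\int\sin^2(\pi(x-y))=-\frac{1}{\pi^2}$ (Proposition \ref{contProp}), which is exactly the constant you want; the test function above has $|f|^2$ anti-correlated with the weight, which is why the weighted and unweighted bounds genuinely differ. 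Establishing that weighted inequality --- sector by sector in $k=n+m$, exhibiting the extremal positive eigenvector $b_{n,k-n}=n(k-n)$ --- is the real content of the proof. If you want to keep your spectral framing, you must compare $A$ not to the identity but to the diagonal matrix with entries $2\sin^2(\pi(i-j)/n)$, and then use that your $\chi$ is a $\pm1$ vector to sum that diagonal exactly.
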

It should be noted that the bound from Theorem \ref{mainThm} is asymptotic to $(0.0145...)n^4$, whereas the best proven lower bound for general drawings is $(0.0134...)n^4$, and the upper bound is $(0.0156...)n^4$. It should also be noted that the class of diagrams that we consider is a generalization of those considered by \cite{cyl}.

The basic idea of the proof is as follows. Let the vertices of the graph be labeled $1,\ldots,n$ in order along the non-crossing hamiltonian cycle. Each edge of the graph not on this cycle must be on either the inside or the outside. Consider a pair of such edges: $(a,c)$ and $(b,d)$. If $a$ and $c$ separate $b$ from $d$ along the circle, and if these two edges are drawn on the same side of the cycle, then they must cross. This reduces our problem to finding the solution to a certain MAX-CUT problem. We make the following definitions:

\begin{defn}
For $a,b,c,d\in \Z/n$, we say that $(a,c)$ \emph{crosses} $(b,d)$ if, $a,b,c,d$ can be assigned representatives $a',b',c',d'$ so that either $a'<b'<c'<d'<a'+n$ or $a'>b'>c'>d'>a'-n$.
\end{defn}

Note that $(a,c)$ crosses $(b,d)$ if and only if $(c,a)$ crosses $(b,d)$. This is because if (without loss of generality) $a'>b'>c'>d'>a'-n$, then $c'>d'>a'-n>b'-n>c'-n$. Similarly, $(a,c)$ crosses $(b,d)$ if and only if $(b,d)$ crosses $(a,c)$.

\begin{defn}
For positive integer $n$, let $G_n$ be the graph whose vertices are unordered pairs of distinct elements of $\Z/n$, and whose edges connect pairs $\{a,c\}$ and $\{b,d\}$ when $(a,c)$ crosses $(b,d)$.
\end{defn}

\begin{lem}\label{maxcutreductionlem}
If a $K_n$ is drawn in the plane in such a way that it has a hamiltonian cycle that does not cross any edges (including its own), then it has at least $|E(G_n)|-\textrm{MAX-CUT}(G_n)$ crossings.
\end{lem}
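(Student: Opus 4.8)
The plan is to reinterpret the geometric freedom in the drawing---the choice of inside versus outside for each non-cycle edge---as a cut of the graph $G_n$, and then to argue that every edge of $G_n$ that is \emph{not} cut forces a genuine crossing. First I would observe that the non-crossing Hamiltonian cycle $H$ is a simple closed curve, so by the Jordan curve theorem it separates the plane into an interior region and an exterior region. Labeling the vertices $1,\ldots,n$ in cyclic order along $H$, every non-cycle edge $\{a,c\}$ meets $H$ only at its endpoints and crosses no edge of $H$, so its relative interior lies entirely in one of the two open regions. This yields a map $s$ from the non-cycle edges into $\{\mathrm{in},\mathrm{out}\}$. Since the cycle edges $\{i,i+1\}$ join consecutive vertices, no pair of vertices can interleave with them, so these vertices are isolated in $G_n$; extending $s$ to them arbitrarily, we obtain a $2$-coloring, equivalently a cut, of $V(G_n)$.

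The crux is the claim that if $\{a,c\}$ and $\{b,d\}$ are adjacent in $G_n$ and $s(\{a,c\})=s(\{b,d\})$, then the corresponding edges of the drawing intersect. Suppose both are drawn inside, and let $D$ be the closed disc consisting of the interior region together with $H$. Adjacency in $G_n$ means that, up to relabeling, $a,b,c,d$ occur in this cyclic order along $H$, so the two arcs of $H$ from $a$ to $c$ separate $b$ from $d$. The edge $\{a,c\}$ is a simple arc in $D$ with both endpoints on $\partial D=H$ and interior in the open disc; such an arc separates $D$ into two components, and the two boundary arcs from $a$ to $c$---one containing $b$, the other containing $d$---land in different components. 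Hence $b$ and $d$ lie in different components of $D$ with the arc $\{a,c\}$ removed, so any arc from $b$ to $d$ inside $D$, in particular the edge $\{b,d\}$, must cross the edge $\{a,c\}$ at an interior point.

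It then remains to count. Each edge of $G_n$ whose endpoints receive the same color under $s$---a \emph{monochromatic} edge---forces, by the previous paragraph, a crossing between a distinct pair of edges of the drawing, so the number of crossings is at least the number of monochromatic edges. Writing the cut determined by $s$ and letting its size (the number of bichromatic edges) be $k$, the number of monochromatic edges is exactly $|E(G_n)|-k$. Since $k\leq\textrm{MAX-CUT}(G_n)$ by definition of the maximum cut, the number of crossings is at least $|E(G_n)|-\textrm{MAX-CUT}(G_n)$, as required.

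The main obstacle is the topological claim of the second paragraph. Everything else is essentially bookkeeping: the translation of the drawing into a cut and the final counting are routine once the claim is in hand. The claim itself is intuitively obvious but needs the Jordan-arc separation property of the disc to be made rigorous, and the care lies in verifying that the two endpoints of the second edge genuinely fall into different components after the first edge is removed, which is exactly where the interleaving hypothesis encoded by adjacency in $G_n$ is used.
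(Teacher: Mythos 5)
Your proof is correct and follows essentially the same route as the paper: translate the inside/outside assignment of non-cycle edges into a cut of $G_n$, observe that monochromatic edges of $G_n$ force crossings, and conclude via $|E(G_n)|-|E(S,\bar{S})|\geq |E(G_n)|-\textrm{MAX-CUT}(G_n)$. The only difference is that you spell out the Jordan-curve/arc-separation argument (and the harmless status of the cycle edges as isolated vertices of $G_n$) that the paper simply asserts, which is a reasonable amount of added rigor rather than a different approach.
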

\begin{proof}
For every such drawing of a graph, label the vertices along the hamiltonian cycle by elements of $\Z/n$ in order. The edges of our $K_n$ now correspond to the vertices of $G_n$ in the obvious way. Let $S$ be the subset of the vertices of $G_n$ corresponding to edges of the $K_n$ that lie within the designated cycle. Note that any two vertices in $S$ or any two vertices not in $S$ connected by an edge, correspond to pairs of edges in the $K_n$ that must cross. Thus the number of crossings of our $K_n$ is at least
$$
|E(S,S)| + |E(\bar{S},\bar{S})| = |E(G_n)| - |E(S,\bar{S})| \geq |E(G_n)| - \textrm{MAX-CUT}(G_n).
$$
\end{proof}

We have thus reduced our problem to bounding the size of the solution of a certain family of MAX-CUT problems. We do this essentially by solving the Goemans-Williamson relaxation of a limiting version of this family of problems. To set things up, we need a few more definitions.

\begin{defn}
By $S^1$ here we will mean $\R/\Z$. Given $a,b,c,d\in S^1$ we say that $(a,c)$ crosses $(b,d)$ if $a,b,c$ and $d$ have representatives $a',b',c',d'\in \R$ respectively, so that either $a'>b'>c'>d'>a'-1$ or $a'<b'<c'<d'<a'+1$. Define the indicator function
$$
C(a,b,c,d):= \begin{cases}1 & \textrm{if }(a,c)\textrm{ crosses }(c,d),\\ 0 & \textrm{otherwise} \end{cases}
$$
\end{defn}

We now present the continuous version of our MAX-CUT problem:

\begin{prop}\label{contProp}
Let $f:S^1\times S^1 \rightarrow \{\pm 1\}$, then
$$
\int_{(S^1)^4} f(w,y)f(x,z)C(w,x,y,z)dwdxdydz \geq \frac{-1}{\pi^2}.
$$
\end{prop}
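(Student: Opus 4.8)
The plan is to read the left-hand side as a quadratic form $Q(f)=\langle f,\mathcal{C}f\rangle$ on $L^2((S^1)^2)$, where $\mathcal{C}$ is the integral operator $(\mathcal{C}f)(w,y)=\int_{(S^1)^2}C(w,x,y,z)f(x,z)\,dx\,dz$. This operator is self-adjoint (that is exactly the symmetry of crossing under interchanging the two chords) and, since its kernel is bounded on a finite measure space, Hilbert--Schmidt, hence compact. Because $f$ takes values in $\{\pm1\}$ we have $\|f\|_{L^2}^2=1$, so $Q(f)\ge\lambda_{\min}(\mathcal{C})\|f\|_{L^2}^2=\lambda_{\min}(\mathcal{C})$, and it suffices to prove $\lambda_{\min}(\mathcal{C})\ge-1/\pi^2$, i.e. $\mathcal{C}+\tfrac1{\pi^2}I\succeq0$. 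This spectral relaxation is the eigenvalue bound underlying Goemans--Williamson, and translation invariance lets us compute it essentially exactly.

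First I would diagonalize $\mathcal{C}$ by symmetry. The kernel $C$ is invariant under the diagonal rotation $(w,x,y,z)\mapsto(w+t,x+t,y+t,z+t)$, so $\mathcal{C}$ commutes with translations and is block-diagonal in the Fourier basis $e_{m,n}(w,y)=e^{2\pi i(mw+ny)}$, the blocks indexed by $k=m+n$. Writing the kernel through the elementary arc-integrals $A_m(v)=\int_0^v e^{2\pi imu}\,du$, a direct computation gives, for $m,n\neq0$,
\[
\mathcal{C}e_{m,n}=\frac{1}{2\pi^2 mn}\bigl(e_{m+n,0}+e_{0,m+n}-e_{m,n}-e_{n,m}\bigr),
\]
together with messier formulas for the boundary modes $e_{m,0},e_{0,n}$ coming from $g_{0,0}(v)=2v(1-v)$ and $g_{m,0}(v)=A_m(v)(1-2v)$. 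The diagonal entry is $-1/(2\pi^2mn)$, and on the self-swapped mode $m=n=1$ the term $-e_{n,m}$ coincides with $-e_{m,n}$, doubling it to exactly $-1/\pi^2$.

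This already yields tightness: the single mode $f=\cos 2\pi(w+y)$ has Rayleigh quotient $-1/\pi^2$, so the constant cannot be improved. In fact the bound holds cleanly on the whole subspace of functions of $w+y$: if $f(w,y)=s(w+y)$ the formula above collapses to $Q(f)=\tfrac13\hat s(0)^2-\tfrac1{\pi^2}\sum_{p\neq0}|\hat s(p)|^2/p^2$, and since $\sum_{p\neq0}|\hat s(p)|^2/p^2\le\|s\|_2^2=\|f\|_2^2$ we get $Q(f)\ge-\tfrac1{\pi^2}\|f\|_2^2$, with equality exactly when $\hat s$ is supported on $p=\pm1$.

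The remaining and hardest step is to upgrade this to all $f$, i.e. to show every block of $\mathcal{C}+\tfrac1{\pi^2}I$ is positive semidefinite. The diagonal entries $-1/(2\pi^2mn)$ are least favorable when $|mn|=1$, which produces a negative diagonal only through $(m,n)=(\pm1,\pm1)$ in the blocks $k=\pm2$; every other block has a strictly larger diagonal, and the block $k=0$ is dispatched by the model computation described below. The obstacle is that $e_{1,1}$ couples through $\mathcal{C}$ to the boundary modes $e_{2,0},e_{0,2}$, whose rows spread across the whole block with coefficients decaying only like $1/j$, so the binding block is a genuine infinite matrix rather than a finite one. I would resolve it by writing the eigenvalue equation $\mathcal{C}\psi=\lambda\psi$ within the block as a recurrence in the coefficients and summing the resulting series in closed form via the identity $\sum_{j\ge1}\tfrac1{j^2(1-\lambda\pi^2 j^2)}=\tfrac{\pi^2}6+\lambda\pi^2\cdot\tfrac{\pi a\cot(\pi a)-1}{2}$ with $a=(\lambda\pi^2)^{-1/2}$ --- the same cotangent summation that, applied to $k=0$, shows that block contributes no negative eigenvalue. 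This reduces each block to a scalar characteristic equation, and the task becomes showing that for every $\lambda<-1/\pi^2$ the characteristic function does not vanish. Controlling the slowly decaying boundary contributions, and the convergence of these series, is the crux of the argument.
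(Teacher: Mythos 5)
Your reduction to the operator inequality $\mathcal{C}+\tfrac{1}{\pi^2}I\succeq 0$ is where the argument breaks: that inequality is false, so no amount of work on the individual Fourier blocks can close the proof. The failure is already visible in your own formula for the block $k=2$. The mode $e_{1,1}$ has Rayleigh quotient exactly $-1/\pi^2$, but it is not an eigenvector of $\mathcal{C}$ --- by your formula $\mathcal{C}e_{1,1}=\tfrac{1}{2\pi^2}(e_{2,0}+e_{0,2})-\tfrac{1}{\pi^2}e_{1,1}$, so it couples to the boundary modes with nonzero coefficient. For a compact self-adjoint operator, a unit vector whose Rayleigh quotient equals $c$ but which is not an eigenvector forces $\lambda_{\min}<c$: perturbing $e_{1,1}$ by a small multiple of the orthogonal component of $\mathcal{C}e_{1,1}$ strictly decreases the quotient. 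Concretely, $f=s\,e_{1,1}+t(e_{2,0}+e_{0,2})$ with real $s,t$ of suitable relative sign already has $\langle f,\mathcal{C}f\rangle/\|f\|_2^2<-1/\pi^2$. So the relaxation $Q(f)\ge\lambda_{\min}(\mathcal{C})\|f\|_2^2$ discards exactly the information that makes the proposition true: you must use the pointwise constraint $|f|=1$ (or $|f|\le 1$), not merely $\|f\|_2=1$.

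The repair is to replace the identity on the right-hand side by a multiplication operator. The paper proves, for every $L^2$ function $f$,
$$
\int_{(S^1)^4}f(w,y)\overline{f(x,z)}C(w,x,y,z)\,dw\,dx\,dy\,dz\ \ge\ \frac{-2}{\pi^2}\int_{(S^1)^2}|f(x,y)|^2\sin^2(\pi(x-y))\,dx\,dy,
$$
and since $\int\sin^2(\pi(x-y))\,dx\,dy=1/2$, the pointwise bound $|f|\le 1$ then yields $-1/\pi^2$. The weight $\sin^2(\pi(x-y))$ is precisely what absorbs the dangerous coupling: in Fourier coordinates the right-hand side becomes $-\tfrac{1}{2\pi^2}\sum_{n,m}|a_{n,m}-a_{n+1,m-1}|^2$, and after the substitution $b_{n,m}=a_{n,m}-a_{n+m,0}$ (which eliminates the boundary modes whose slowly decaying rows you identified as the crux) the claim reduces, block by block in $k=n+m$, to a finite-dimensional comparison of two quadratic forms whose extremal vector is $b_{n,k-n}=n(k-n)$, with extremal ratio exactly $1$. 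Your diagonalization by $k=m+n$, your identification of $e_{1,1}$ as the tight mode, and your $k=0$ computation all survive in that framework; the step that must change is the choice of majorant on the right-hand side.
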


We prove this by instead proving the following stronger result:

\begin{prop}\label{relaxedProp}
Let $f:S^1\times S^1 \rightarrow \C$ satisfy $|f(x,y)|\leq 1$ for all $x,y$, then
$$
\int_{(S^1)^4} f(w,y)\overline{f(x,z)}C(w,x,y,z)dwdxdydz \geq \frac{-1}{\pi^2}.
$$
Furthermore, for any $L^2$ function $f:S^1\times S^1\rightarrow\C$, we have that
\begin{align}\label{mainEqn}
\int_{(S^1)^4}  f(w,y)\overline{f(x,z)}C(w,x,y,z)&dwdxdydz \notag \\ & \geq \frac{-2}{\pi^2} \int_{(S^1)^2} |f(x,y)|^2\sin^2(\pi(x-y))dxdy.
\end{align}
\end{prop}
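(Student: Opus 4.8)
The plan is to prove the second, stronger assertion \eqref{mainEqn}; the first inequality then follows at once, since when $|f|\le 1$ we have $\int_{(S^1)^2}|f|^2\sin^2(\pi(x-y))\,dx\,dy\le\int_{(S^1)^2}\sin^2(\pi(x-y))\,dx\,dy=\tfrac12$, so the right-hand side of \eqref{mainEqn} is at least $-1/\pi^2$. Both sides of \eqref{mainEqn} are bounded Hermitian forms in $f$ (the kernel $C$ and the weight $\sin^2$ are bounded), so by $L^2$-continuity it suffices to treat trigonometric polynomials $f=\sum_{j,k}\hat f(j,k)e^{2\pi i(jx+ky)}$, for which every sum below is finite. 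The structural input is that $C$ is invariant under the simultaneous rotation $(w,x,y,z)\mapsto (w+t,x+t,y+t,z+t)$, crossing being a rotation-invariant relation; hence the left-hand form couples $\hat f(j,k)$ to $\overline{\hat f(j',k')}$ only when $j+k=j'+k'$, and the whole problem block-diagonalizes over the total frequency $m:=j+k$. I write $c_j:=\hat f(j,m-j)$ for the coordinates in the block $m$.

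The next step is to compute the block form explicitly. Writing $C=\tfrac12\big(1-\sigma(w,y;x)\sigma(w,y;z)\big)$, where $\sigma(w,y;u)=+1$ if $u$ lies on the counterclockwise arc from $w$ to $y$ and $-1$ otherwise, everything reduces to the Fourier coefficients of $\sigma$ in its last slot: $\int_{S^1}\sigma(w,y;u)e^{-2\pi i n u}\,du=\frac{e^{-2\pi i n w}-e^{-2\pi i n y}}{\pi i n}$ for $n\neq 0$, and $2\{y-w\}-1$ for $n=0$. Substituting these into the inner integrals over $x$ and $z$ and then over $w,y$ collapses all the $\delta$-functions and yields a block Hermitian form $T_m$, coming from the $\sigma\sigma$ term, whose entries are rational multiples of $\frac1{n(m-n)}$; the constant $1$ in $C$ contributes only the nonnegative rank-one term $\tfrac12|\hat f(0,0)|^2$ sitting in the block $m=0$. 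In the same coordinates the right-hand side of \eqref{mainEqn} is exactly a scaled discrete Dirichlet form, $-\frac{2}{\pi^2}\int_{(S^1)^2}|f|^2\sin^2(\pi(x-y))=-\frac1{2\pi^2}\sum_m\sum_j|c_j-c_{j+1}|^2$. Thus, for each $m\neq 0$, the claim reduces to the scalar inequality $\pi^2 T_m\le \sum_j|c_j-c_{j+1}|^2$.

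Before attacking this I would exploit one further symmetry. The kernel is invariant under the reflection $n\mapsto m-n$, and in fact depends on $c$ only through the symmetric combinations $c_n+c_{m-n}$, so $T_m$ vanishes on sequences antisymmetric under this reflection and carries no cross terms between the symmetric and antisymmetric parts; the Dirichlet form is likewise reflection invariant and therefore splits across the two parts. Since the antisymmetric part contributes nonnegatively to the Dirichlet form, it is enough to prove $\pi^2 T_m\le\sum_j|c_j-c_{j+1}|^2$ for reflection-symmetric $c$.

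The heart of the matter is this per-block inequality. Setting $B:=c_0+c_m$ and $V:=\sum_{n\neq 0,m}\frac{c_n}{n(m-n)}$, the computation above can be arranged as
$$\pi^2 T_m=\frac12\sum_{n\neq 0,m}\frac{|c_n+c_{m-n}|^2}{n(m-n)}+m^2|V|^2-\frac1{m^2}|B+m^2V|^2.$$
Completing this square in the boundary variable $B$ is exactly what makes the anomalous boundary rows $n\in\{0,m\}$ (which do \emph{not} obey the naive $\frac1{n(m-n)}$ formula) fall into place, and the final negative square is essential and may not be discarded. What survives is a sharp discrete Hardy-type inequality: under $n=m\theta$ the weight $\frac1{n(m-n)}$ is comparable to $\frac1{m^2\theta(1-\theta)}$, the Dirichlet form discretizes $\int|c'|^2$, and the constant $1/\pi^2$ is the sharp constant, with equality at the balanced mode $c_n\propto\delta_{n,m/2}$ (already visible at $m=2$, where $\pi^2T_2=\sum_j|c_j-c_{j+1}|^2$). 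I expect the main obstacle to be precisely the verification of this sharp constant uniformly in $m$: I would establish it by solving the generalized eigenvalue problem for $T_m$ against the Dirichlet form, claiming that its largest eigenvalue equals $1$, or, failing a clean diagonalization, by exhibiting an explicit sum-of-squares / telescoping certificate for the difference of the two forms. The block $m=0$ is handled identically, with the needed slack supplied by the nonnegative rank-one term $\tfrac12|\hat f(0,0)|^2$.
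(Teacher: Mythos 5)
Your overall skeleton matches the paper's: expand $f$ in a double Fourier series, use rotation invariance to block-diagonalize over the total frequency $k=n+m$, symmetrize so that only the reflection-symmetric part of each block matters, and reduce \eqref{mainEqn} to a per-block inequality between a weighted form with weights $\frac{1}{n(k-n)}$ and a discrete Dirichlet form. Your route to the Fourier coefficients of $C$ via the factorization $C=\tfrac12\left(1-\sigma(w,y;x)\sigma(w,y;z)\right)$ is a legitimate (and arguably slicker) alternative to the paper's residue computation in Lemma \ref{FTLem}, your deduction of the first inequality from \eqref{mainEqn} via $\int\sin^2(\pi(x-y))\,dx\,dy=\tfrac12$ is exactly right, and your identification of the right-hand side of \eqref{mainEqn} as $-\frac{1}{2\pi^2}\sum_k\sum_j|c_j-c_{j+1}|^2$ is correct.

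The gap is that you stop exactly where the real work begins. The per-block inequality is announced as ``a sharp discrete Hardy-type inequality'' to be established by ``solving the generalized eigenvalue problem \dots claiming that its largest eigenvalue equals $1$,'' or else by an unspecified sum-of-squares certificate --- but neither is carried out, and this verification is the entire substance of the paper's proof of the proposition. Worse, your stated equality case is wrong: after subtracting the boundary coefficient (the paper's $b_{n,k-n}=a_{n,k-n}-a_{k,0}$, which vanishes at $n\in\{0,k\}$), the extremizer is the spread-out profile $b_{n,k-n}=n(k-n)$, for which both sides of the per-block inequality equal $\frac{k^3-k}{6}$; the concentrated mode $b_{n,k-n}\propto\delta_{n,k/2}$ gives a ratio of $4/k^2$, which is far from $1$ for every $k>2$. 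The paper pins down the constant by passing to the increment variables $c_n=b_{n,k-n}-b_{n-1,k-n+1}$, observing that the weighted form has all-positive coefficients in these variables so that its top generalized eigenvector is the unique positive one, and then verifying that $c_n=k+1-2n$ is that eigenvector with eigenvalue exactly $1$. Without this step (or an equivalent certificate), and without an actual derivation of your claimed closed form for $\pi^2 T_m$ --- which you assert rather than compute, and which must handle the anomalous boundary rows $n\in\{0,m\}$ correctly --- the proposal does not yet constitute a proof.
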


The proof of Proposition \ref{relaxedProp} will involve looking at the Fourier transforms of the functions involved. Before we can begin with this we need the following definition:
\begin{defn}
Define the function
$$
e(x):= e^{2\pi i x}.
$$
\end{defn}

We now express the Fourier transform of $C$.
\begin{lem}\label{FTLem}
We have that $C(w,x,y,z)$ is equal to:
\begin{align*}
& \frac{-1}{2\pi^2}\sum_{n,m\in \Z\backslash \{0\}} \frac{1}{nm}\left( e(nw -nx +my -mz) + e(nw -mx +my -nz)\right) \\
+ & \frac{1}{2\pi^2}\sum_{n,m\in \Z\backslash \{0\}} \frac{1}{nm}\left(e(-mx + (n+m)y -nz) +e(nw + my -(n+m)z)\right)\\
+ &\frac{1}{2\pi^2}\sum_{n,m\in \Z\backslash \{0\}}\frac{1}{nm}\left( e((n+m)w -nx - mz) - e(nw -(n+m)x +my)\right)\\
+ & \frac{1}{3}.
\end{align*}
\end{lem}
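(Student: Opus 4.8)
The plan is to derive the Fourier series of $C$ from two ingredients: a purely geometric rewriting of the crossing indicator, and the elementary Fourier series of the sawtooth. Throughout write $s(t):=\{t\}-\tfrac12$ for the periodized sawtooth, which has the conditionally convergent expansion $s(t)=-\frac{1}{2\pi i}\sum_{n\neq 0}\frac{e(nt)}{n}$, and for three distinct points $a,b,c\in S^1$ let $o(a,b,c)\in\{\pm1\}$ record whether $a,b,c$ occur in increasing cyclic order. The first step is the observation that the chord $\{w,y\}$ separates $x$ from $z$ precisely when $x$ and $z$ lie on opposite sides of it, so that $o(w,y,x)$ and $o(w,y,z)$ have opposite signs exactly when the chords cross. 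This gives
\[
C(w,x,y,z)=\tfrac12\bigl(1-o(w,y,x)\,o(w,y,z)\bigr),
\]
valid off the measure-zero set where two points coincide (an almost-everywhere identity is all we need).

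The second step expresses $o$ through $s$. Since $(b-a)+(c-b)+(a-c)=0$, the integer $\{b-a\}+\{c-b\}+\{a-c\}$ equals $1$ or $2$ according to the cyclic order, so $o(a,b,c)=3-2\bigl(\{b-a\}+\{c-b\}+\{a-c\}\bigr)$; substituting $\{t\}=s(t)+\tfrac12$ yields the clean formula $o(a,b,c)=-2\bigl(s(b-a)+s(c-b)+s(a-c)\bigr)$. Applying this to both factors turns $o(w,y,x)\,o(w,y,z)$ into $4AB$, where $A:=s(y-w)+s(x-y)+s(w-x)$ and $B:=s(y-w)+s(z-y)+s(w-z)$, so that $C(w,x,y,z)=\tfrac12-2AB$.

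Third, I would substitute the sawtooth series into each of the six factors. Because $\bigl(-\tfrac{1}{2\pi i}\bigr)^2=-\tfrac{1}{4\pi^2}$, the product becomes $AB=-\frac{1}{4\pi^2}\sum_{n,m\neq0}\frac{1}{nm}(\cdots)$, a double sum of the nine bilinear terms $e(n\alpha_i+m\beta_j)$ obtained by pairing the three phases $\alpha_i\in\{y-w,\,x-y,\,w-x\}$ of $A$ with the three phases $\beta_j\in\{y-w,\,z-y,\,w-z\}$ of $B$; hence $-2AB=\frac{1}{2\pi^2}\sum_{n,m}\frac{1}{nm}(\cdots)$, matching the $\pm\frac{1}{2\pi^2}$ prefactors in the statement. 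Only the term $s(y-w)^2$ (the pairing $\alpha_1\beta_1$) has nonzero mean: its diagonal part $m=-n$ contributes $\frac{1}{2\pi^2}\sum_{n\neq0}\frac{1}{n(-n)}=-\frac{1}{2\pi^2}\cdot\frac{\pi^2}{3}=-\frac16$, which combines with the leading $\tfrac12$ to give the constant $\tfrac13$. As a consistency check, $\int C=\tfrac13$ is the probability that one fixed pairing of four i.i.d. uniform points crosses.

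The final step, which I expect to be the main obstacle, is to reorganize the remaining oscillatory terms into exactly the six displayed shapes. The permissible moves are only the relabelings that preserve the summation domain and the weight $\frac{1}{nm}$ up to sign, namely the swap $n\leftrightarrow m$ and the sign changes $n\to-n$, $m\to-m$ (each single flip reversing the sign of $\frac{1}{nm}$, which is the source of the alternating signs among the displayed groups); one normalizes each exponent into one of the six listed forms by such substitutions. The genuinely delicate point concerns the term $s(y-w)^2$: beyond its constant it carries an oscillatory function of $w-y$ alone, and one must track carefully how this residue is distributed across the displayed groups through their degenerate sub-sums (e.g.\ $n+m=0$) and confirm it is reproduced exactly. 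Because the sawtooth series converges only conditionally, I would justify every rearrangement either by working with symmetric partial sums $\sum_{|n|,|m|\le N}$ or, more cleanly, by reading all identities in $L^2(S^1\times S^1)$, which is the only sense in which Lemma \ref{FTLem} is used in the proof of Proposition \ref{relaxedProp}.
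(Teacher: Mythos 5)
Your first three steps are correct and take a genuinely different route from the paper: the paper writes $(-1)^{C}$ as a product of \emph{four} factors $e([x-w]/2)\cdots e([w-z]/2)$, expands each as a one-variable Fourier series, and evaluates the resulting lattice sums by residue calculus with $\pi\cot(\pi z)$; you instead factor $1-2C$ as a product of \emph{two} orientation functions, each a sum of three sawtooths, which avoids contour integration entirely. The identities $C=\tfrac12(1-o(w,y,x)\,o(w,y,z))$ and $o(a,b,c)=-2(s(b-a)+s(c-b)+s(a-c))$, and your extraction of the constant $\tfrac13$ from the diagonal of $s(y-w)^2$, are all right.

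The gap is in the final step. First, the moves you permit yourself --- the swap $n\leftrightarrow m$ and the sign flips $n\to-n$, $m\to-m$ --- cannot reorganize the nine bilinear terms into the six displayed shapes. For instance, the pairing $s(y-w)s(z-y)$ contributes $e(-nw+(n-m)y+mz)$; to reach the displayed shape $e(nw+my-(n+m)z)$ you must substitute $(n,m)\mapsto(-n,-(n+m))$, a shear that turns the weight $\tfrac1{nm}$ into $\tfrac1{n(n+m)}$. Only after adding the matching contribution $\tfrac1{m(n+m)}$ from the pairing $s(y-w)s(w-z)$ do you recover $\tfrac1{nm}$, via $\tfrac1{n(n+m)}+\tfrac1{m(n+m)}=\tfrac1{nm}$. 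So the reorganization genuinely requires unimodular shears of the $(n,m)$-lattice plus a partial-fraction recombination of \emph{pairs} of your nine terms, together with separate bookkeeping on the locus $n+m=0$ where the shear leaves the stated index set; swaps and sign flips alone cannot produce it.

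Second, and more seriously: if you carry this recombination out, your (correct) expansion does not land on the displayed formula, so the confirmation you defer to the end cannot succeed for the statement as written. Merging $s(x-y)s(y-w)$ with $s(w-x)s(y-w)$ gives the term $e(nw-(n+m)x+my)$ the coefficient $+\tfrac1{2\pi^2nm}$, whereas the display carries it with a minus sign; one can check independently, by the paper's own residue method, that the coefficient of $e(w-2x+y)$ in $C$ is $+\tfrac1{2\pi^2}$ (a single simple pole at $z=1/2$), agreeing with your expansion and not with the display. Likewise your expansion retains nonzero two-variable modes --- e.g.\ the coefficient of $e(k(y-x))$ is $\tfrac1{2\pi^2k^2}$, consistent with the fact that $\int C\,dw\,dz=\tfrac12\left([y-x]^2+(1-[y-x])^2\right)$ is not constant --- while integrating the displayed right-hand side over $w$ and $z$ yields the constant $\tfrac13$. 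Your method is sound and, pushed through, yields a corrected version of the identity; but as a proof of the Lemma as literally displayed it cannot close, and the discrepancies need to be resolved (in the statement, not in your expansion) before the reorganization step can be called complete.
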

\begin{proof}
For $x\in\R/\Z$ let $[x]$ be the representative of $x$ lying in $[0,1]$. For any $w,x,y,z\in \R/\Z$, it is clear that $[x-w]+[y-x]+[z-y]+[w-z]\in\Z.$ It is not hard to see that this number is odd if and only if $(w,y)$ crosses $(x,z)$. Thus,
\begin{align*}
(-1)^{C(w,x,y,z)} &= e\left(\frac{[x-w]}{2} + \frac{[y-x]}{2} + \frac{[z-y]}{2} + \frac{[w-z]}{2}\right) \\ &= e\left(\frac{[x-w]}{2}\right)e\left(\frac{[y-x]}{2}\right)e\left(\frac{[z-y]}{2}\right)e\left(\frac{[w-z]}{2}\right).
\end{align*}
In order to compute the Fourier transform, we compute the Fourier transform of each individual term. Note that
\begin{align*}
\int e\left(\frac{[x-w]}{2}\right)e(-nx-mw)dxdw & = \int e\left(\frac{\alpha}{2}\right)e(-mw-n(\alpha+w))d\alpha dw\\
& = \int e(-(m+n)w-(n-1/2)\alpha)d\alpha dw\\
& = \frac{\delta_{m,-n}}{\pi i (n-1/2)}.
\end{align*}
Therefore, by standard Fourier analysis, we can say that
$$
e\left(\frac{[x-w]}{2}\right) = \frac{i}{\pi} \sum_{a\in\Z} \frac{e(aw-ax)}{a+1/2}.
$$
We have similar formulae for $e\left(\frac{[y-x]}{2}\right),e\left(\frac{[z-y]}{2}\right),$ and $e\left(\frac{[w-z]}{2}\right).$ Multiplying them together, we find that
$$
(-1)^{C(w,x,y,z)} = \frac{1}{\pi^4}\sum_{a,b,c,d\in\Z} \frac{e\left((a-d)w+(b-a)x+(c-b)y+(d-c)z \right)}{(a+1/2)(b+1/2)(c+1/2)(d+1/2)}.
$$

We now need to collect like terms. In particular, for every $4$-tuple of integers $\alpha,\beta,\gamma,\delta$, the coefficient of $e(\alpha w+\beta x+\gamma y + \delta z)$ equals the sum over $4$-tuples of integers $a,b,c,d$ with $\alpha=a-d$,$\beta=b-a$,$\gamma=c-b$,$\delta=d-c$ of
$$
\frac{1}{\pi^4(a+1/2)(b+1/2)(c+1/2)(d+1/2)}.
$$
Clearly, there are no such $a,b,c,d$ unless $\alpha+\beta+\gamma+\delta=0$. If this holds, then all such $4$-tuples are of the form $n,n+\beta,n+\beta+\gamma,n+\beta+\gamma+\delta$ for $n$ an arbitrary integer. Thus, we need to evaluate
$$
\frac{1}{\pi^4} \sum_{n\in\Z} \frac{1}{(n+1/2)(n+\beta+1/2)(n+\beta+\gamma+1/2)(n+\beta+\gamma+\delta+1/2)}.
$$

Consider the complex analytic function
$$
g(z) = \frac{ \pi\cot(\pi z)}{(z+1/2)(z+\beta+1/2)(z+\beta+\gamma+1/2)(z+\beta+\gamma+\delta+1/2)}.
$$
Note that along the contour $\max(|\Re(z)|,|\Im(z)|)=m+1/2$ for $m$ a large integer, $|g(z)|=O(m^{-4})$. Thus the limit over $m$ of the integral of $g$ over this contour is 0. This implies that the sum of all residues of $g$ is 0. Note that $g$ has poles only when either $z$ in an integer or when $(z+1/2)(z+\beta+1/2)(z+\beta+\gamma+1/2)(z+\beta+\gamma+\delta+1/2)=0$. At $z=n$, $g$ has residue
$$
 \frac{1}{(n+1/2)(n+\beta+1/2)(n+\beta+\gamma+1/2)(n+\beta+\gamma+\delta+1/2)}.
$$
Thus,
$$
\sum_{n\in\Z} \frac{1}{(n+1/2)(n+\beta+1/2)(n+\beta+\gamma+1/2)(n+\beta+\gamma+\delta+1/2)} = -\sum_{\rho\not\in\Z} \textrm{Res}_\rho(g).
$$

Therefore, $(-1)^{C(w,x,y,z)}$ equals
\begin{align*}
\frac{-1}{\pi^4}\sum_{\alpha+\beta+\gamma+\delta=0}e(\alpha w+\beta x+\gamma y+\delta z)\sum_{\rho\not\in\Z}\textrm{Res}_\rho(f_{\alpha,\beta,\gamma,\delta}).
\end{align*}

Note that all other such residues are at half integers. Note furthermore that $\cot(\pi z)$ is an odd function around half integers. Thus, $g$ has a residue at $z\not\in \Z$ only if $z$ is a root of $(z+1/2)(z+\beta+1/2)(z+\beta+\gamma+1/2)(z+\beta+\gamma+\delta+1/2)$ of even order, and in particular order at least 2. In other words, we have residues only when some pair of elements of $(0,\beta,\beta+\gamma,\beta+\gamma+\delta)$ are the same, but no three of them are unless all four are 0. In particular, we get residues in the following cases:

\begin{itemize}
\item When $\beta=0$, let $\alpha=n,\gamma=m$. Then, for $(\alpha,\beta,\gamma,\delta)=(n,0,m,-(n+m))$, we have a residue at $\rho=-1/2$ of $\frac{\pi^2}{nm}$ so long as $n,m\neq 0$.
\item When $\gamma=0$, let $\beta=-n,\delta=-m$. Then, for $(\alpha,\beta,\gamma,\delta)=(n+m,-n,0,-m)$, we have a residue at $\rho=n-1/2$ of $\frac{\pi^2}{nm}$ so long as $n,m\neq 0$.
\item When $\delta=0$, let $\alpha=n,\gamma=m$. Then, for $(\alpha,\beta,\gamma,\delta)=(n,-(n+m),m,0)$, we have a residue at $\rho=n-1/2$ of $\frac{\pi^2}{nm}$ so long as $n,m\neq 0$.
\item When $\alpha=0$, let $\beta=-n,\delta=-m$. Then, for $(\alpha,\beta,\gamma,\delta)=(0,-n,n+m,-m)$, we have a residue at $\rho=-1/2$ of $\frac{\pi^2}{nm}$ so long as $n,m\neq 0$.
\item When $\alpha+\beta=0$, let $\alpha=n,\gamma=m$. Then for $(\alpha,\beta,\gamma,\delta)=(n,-n,m,-m)$, we have a residue at $\rho=n-1/2$ of $\frac{-\pi^2}{nm}$ so long as $n,m\neq 0$.
\item When $\beta+\gamma=0$, let $\alpha=n,\gamma=m$. Then for $(\alpha,\beta,\gamma,\delta)=(n,-m,m,-n)$, we have a residue at $\rho=-1/2$ of $\frac{-\pi^2}{nm}$ so long as $n,m\neq 0$.
\item When $\alpha=\beta=\gamma=\delta$, we have a residue at $\rho=-1/2$.
\end{itemize}
Thus we have that $(-1)^{C(w,x,y,z)}$ equals
\begin{align*}
& \frac{1}{\pi^2}\sum_{n,m\in \Z\backslash \{0\}} \frac{1}{nm}\left( e(nw -nx +my -mz) + e(nw -mx +my -nz)\right) \\
- & \frac{1}{\pi^2}\sum_{n,m\in \Z\backslash \{0\}} \frac{1}{nm}\left(e(-mx + (n+m)y -nz) +e(nw + my -(n+m)z)\right)\\
- &\frac{1}{\pi^2}\sum_{n,m\in \Z\backslash \{0\}}\frac{1}{nm}\left( e((n+m)w -nx - mz) - e(nw -(n+m)x +my)\right)\\
+ & D.
\end{align*}
For some constant $D$. Noting that $C(w,x,y,z)=\frac{1-(-1)^{C(w,x,y,z)}}{2}$, we have that $C(w,x,y,z)$ equals
\begin{align*}
& \frac{-1}{2\pi^2}\sum_{n,m\in \Z\backslash \{0\}} \frac{1}{nm}\left( e(nw -nx +my -mz) + e(nw -mx +my -nz)\right) \\
+ & \frac{1}{2\pi^2}\sum_{n,m\in \Z\backslash \{0\}} \frac{1}{nm}\left(e(-mx + (n+m)y -nz) +e(nw + my -(n+m)z)\right)\\
+ &\frac{1}{2\pi^2}\sum_{n,m\in \Z\backslash \{0\}}\frac{1}{nm}\left( e((n+m)w -nx - mz) - e(nw -(n+m)x +my)\right)\\
 + & D'.
\end{align*}
On the other hand, $D'=\int_{(S^1)^4}C(w,x,y,z).$ Note that given any $w,x,y,z$ distinct that of the three ways to partition $\{w,x,y,z\}$ into two pairs, exactly one gives a set of crossing pairs. Thus $C(w,x,y,z)+C(w,y,x,z)+C(w,x,z,y)$ equals $1$ except on a set of measure 0. Thus, since the integral of each of these is $D'$, we have that $3D'=1$, or that $D'=1/3.$ This completes the proof.
\end{proof}

\begin{proof}[Proof of Proposition \ref{relaxedProp}]
Since $f$ is $L^2$ we may write
$$
f(x,y) = \sum_{n,m\in\Z} a_{n,m}e(nx+my)
$$
for complex numbers $a_{n,m}$ with $\sum_{n,m} |a_{n,m}|^2<\infty$. Notice that replacing $f(x,y)$ by $\frac{f(x,y)+f(y,x)}{2}$ does not effect the left hand side of Equation \eqref{mainEqn}, and can only increase the right hand side. Thus we can assume that $f(x,y)=f(y,x)$, and therefore that $a_{n,m}=a_{m,n}$.

By Lemma \ref{FTLem}, the left hand side of Equation \eqref{mainEqn} is
\begin{align*}
& \frac{-1}{2\pi^2}\sum_{n,m\in \Z\backslash \{0\}} \frac{a_{n,m}\overline{a_{n,m}}+a_{n,m}\overline{a_{m,n}}}{nm}
+ \frac{1}{2\pi^2}\sum_{n,m\in \Z\backslash \{0\}} \frac{a_{0,n+m}\overline{a_{m,n}}+a_{n,m}\overline{a_{0,n+m}}}{nm}\\
+ &\frac{1}{2\pi^2}\sum_{n,m\in \Z\backslash \{0\}}\frac{a_{n+m,0}\overline{a_{n,m}}+a_{n,m}\overline{a_{n+m,0}}}{nm}
 +  \frac{a_{0,0}\overline{a_{0,0}}}{3}.
\end{align*}
Using $a_{n,m}=a_{m,n}$, this simplifies to
\begin{align*}
& \frac{-1}{\pi^2}\sum_{n,m\in\Z\backslash \{0\}} \frac{a_{n,m}\overline{a_{n,m}}-a_{n,m}\overline{a_{n+m,0}}-a_{n+m,0}\overline{a_{n,m}}}{nm}+\frac{|a_{0,0}|^2}{3}\\
= & \frac{-1}{\pi^2}\sum_{n,m\in\Z\backslash \{0\}} \frac{|a_{n,m}-a_{n+m,0}|^2-|a_{n+m,0}|^2}{nm}+\frac{|a_{0,0}|^2}{3}\\
= & \frac{-1}{\pi^2}\sum_{n,m\in\Z\backslash \{0\}} \frac{|a_{n,m}-a_{n+m,0}|^2}{nm} +\frac{1}{\pi^2} \sum_{k\in\Z}|a_{k,0}|^2 \left(\sum_{n+m=k}\frac{1}{mn}\right)+\frac{|a_{0,0}|^2}{3}.
\end{align*}

We claim that for $k\neq 0$ that $\sum_{n+m=k}\frac{1}{nm}=0.$ This can be seen by noting that
$$
\frac{1}{n(k-n)} = \frac{-1}{k}\left(\frac{1}{n} - \frac{1}{n-k} \right),
$$
producing a telescoping sum.

If $k=0$,
$$\sum_{n+m=k}\frac{1}{nm} = \sum_{n\in\Z\backslash \{0\}} \frac{-1}{n^2} = -2\zeta(2) = \frac{-\pi^2}{3}.$$

Therefore, the left hand side of Equation \eqref{mainEqn} is
\begin{align*}
& \frac{-1}{\pi^2}\sum_{n,m\in\Z\backslash \{0\}} \frac{|a_{n,m}-a_{n+m,0}|^2}{nm} -\frac{1}{\pi^2} |a_{0,0}|^2 \left(\frac{\pi^2}{3}\right)+\frac{|a_{0,0}|^2}{3}\\
& = \frac{-1}{\pi^2}\sum_{n,m\in\Z\backslash \{0\}} \frac{|a_{n,m}-a_{n+m,0}|^2}{nm}.
\end{align*}

The right hand side of Equation \eqref{mainEqn} is
\begin{align*}
& \frac{-1}{\pi^2}\sum_{n,m} \frac{a_{n,m}\left(\overline{2a_{n,m}}-\overline{a_{n+1,m-1}}-\overline{a_{n-1,m+1}} \right)}{2}\\
= & \frac{-1}{\pi^2} \sum_{n,m} {a_{n,m}\left(\overline{a_{n,m}}-\overline{a_{n+1,m-1}}\right)}\\
= & \frac{-1}{2\pi^2} \sum_{n,m} |a_{n,m}-a_{n+1,m-1}|^2\\
= & \frac{-1}{2\pi^2} \sum_{n,m} |(a_{n,m}-a_{n+m,0})-(a_{n+1,m-1}-a_{n+m,0})|^2.
\end{align*}

We now let $b_{n,m} = a_{n,m}-a_{n+m,0}$. Notice that $b_{0,k}=b_{k,0}=0$. Equation \eqref{mainEqn} is now equivalent to
$$
\sum_{n,m\neq 0} \frac{-|b_{n,m}|^2}{nm} + \sum_{n,m} \frac{|b_{n,m}-b_{n+1,m-1}|^2}{2} \geq 0.
$$
We will in fact prove the stronger statement that
$$
\sum_{nm>0}\frac{|b_{n,m}|^2}{nm} \leq \sum_{(n+1)m>0}\frac{|b_{n,m}-b_{n+1,m-1}|^2}{2}.
$$
We note by symmetry that we can assume that $n,m>0$. We also note that it suffices to prove for each $k>0$ that
\begin{equation}\label{onelevelEqn}
\sum_{n,m>0, n+m=k}\frac{|b_{n,m}|^2}{nm} \leq \sum_{n+1,m>0, n+m=k}\frac{|b_{n,m}-b_{n+1,m-1}|^2}{2}.
\end{equation}
For fixed $k$, let $c_n = b_{n,k-n}-b_{n-1,k-n+1}$. By the symmetry exhibited by the $b$'s, the right hand side of Equation \eqref{onelevelEqn} is
$$
\sum_{n=1}^{\lfloor k/2\rfloor} |c_n|^2.
$$
Meanwhile, the right hand side is
\begin{equation}\label{qformEqn}
\sum_{n=1}^{\lfloor k/2\rfloor} \frac{\left|\sum_{i=1}^n c_i \right|^2}{n(k-n)} +
\sum_{n=1}^{\lfloor (k-1)/2\rfloor} \frac{\left|\sum_{i=1}^n c_i \right|^2}{n(k-n)}.
\end{equation}
Thus, the right hand side is given by a quadratic form in the $c_1,\ldots,c_{\lfloor k/2 \rfloor}$ with positive coefficients. Therefore, the biggest ratio between the right and left and sides is obtained by the unique eigenvector of this quadratic form for which all $c_i$ are positive. We claim that this happens when $c_n=k+1-2n$. For these $c$'s, the derivative of the expression in Equation \eqref{qformEqn} with respect to $c_m$ is
$$
2\sum_{n=m}^{\lfloor k/2\rfloor} \frac{\left(\sum_{i=1}^n c_i \right)}{n(k-n)} +
2\sum_{n=m}^{\lfloor (k-1)/2\rfloor} \frac{\left(\sum_{i=1}^n c_i \right)}{n(k-n)}.
$$
It is easy to verify that for this choice of $c_i$ that
$$
\sum_{i=1}^n c_i = n(k-n).
$$
Thus, the above reduces to
\begin{align*}
2\sum_{n=m}^{\lfloor k/2\rfloor} 1 +
2\sum_{n=m}^{\lfloor (k-1)/2\rfloor} 1 & = 2(\lfloor k/2\rfloor - m + 1) + 2(\lfloor (k-1)/2\rfloor - m + 1) \\ & = 2(k-2m+1) = 2c_m.
\end{align*}

Thus, these $c_i$ give the unique positive eigenvector. Hence it suffices to check Equation \eqref{onelevelEqn} when $c_m = k-2m+1$, or equivalently when $b_{n,k-n} = n(k-n)$. In this case, the left hand side of Equation \eqref{onelevelEqn} is
\begin{align*}
\sum_{n=1}^{k-1} n(k-n) & = \sum_{n=1}^{k-1} (kn-n^2)\\
& = \frac{k^2(k-1)}{2} - \frac{(k-1)k(2k-1)}{6}\\
& = \frac{k(k-1)(k+1)}{6}\\
& = \frac{k^3-k}{6}.
\end{align*}
For this choice, the right hand side is
\begin{align*}
\sum_{n=1}^k \frac{(k+1-2n)^2}{2} & = \sum_{n=1}^k \frac{k^2+2k-4kn+1-4n+4n^2}{2}\\
& = \frac{k^3}{2} + k^2 - k^2(k+1) + \frac{k}{2} - k(k+1) + \frac{k(k+1)(2k+1)}{3}\\
& = \frac{3k^3 + 6k^2 - 6k^3-6k^2 + 3k -6k^2-6k + 4k^3+6k^2+2k}{6}\\
& = \frac{k^3-k}{6}.
\end{align*}
Thus, the largest possible ratio between the left and right hand sides of Equation \eqref{onelevelEqn} is 1. This completes our proof.
\end{proof}

We are now prepared to prove our main theorem.

\begin{proof}[Proof of Theorem \ref{mainThm}]
We will proceed by way of Lemma \ref{maxcutreductionlem}. We note that $|E(G_n)|=n^4/24+O(n^3)$. We have only to bound the size of the MAX-CUT of $G_n$. Consider any subset $S$ of the vertices of $G_n$ defining a cut. We wish to bound the number of edges that cross this cut. Define the function $f_S:S^1\times S^1\rightarrow\{\pm 1\}$ as follows:
$$
f_S(x,y) = \begin{cases} 1 & \textrm{if }(\lfloor nx \rfloor,\lfloor ny \rfloor)\in S \\ -1 & \textrm{otherwise}  \end{cases}
$$

Consider
\begin{equation}\label{bigIntEqn}
\int_{(S^1)^4}f_S(w,y)f_S(x,z)C(w,x,y,z)dwdxdydz.
\end{equation}
In order to evaluate this expression, we consider the integral over the region $$R_{a,b,c,d}=[a/n,(a+1)/n]\times[b/n,(b+1)/n]\times[c/n,(c+1)/n]\times[d/n,(d+1)/n]$$ for some $a,b,c,d\in\Z/n$. We note that over this region that $f_S(w,y)f_S(x,z)$ is constant. In particular, it is $1$ if $(a,c)$ and $(b,d)$ are either both in $S$ or both not in $S$, and $-1$ otherwise. It should also be noted that if $a,b,c,d$ are distinct then $C(w,x,y,z)$ is also constant on this region, and in particular is $1$ if $G_n$ contains an edge between $(a,c)$ and $(b,d)$. Thus the expression in Equation \eqref{bigIntEqn} is
\begin{align*}
& \sum_{a,b,c,d} \int_{R_{a,b,c,d}}f_S(w,y)f_S(x,z)C(w,x,y,z)dwdxdydz \\
= & \sum_{\substack{a,b,c,d\\ \textrm{non-distinct}}}\int_{R_{a,b,c,d}} O(1) + \sum_{\substack{a,b,c,d \\\{\{a,c\},\{b,d\}\}\in E(G_n)}}\frac{f_S(a/n,c/n)f_S(b/n,d/n)}{n^4}\\
= & 8n^{-4}(|\textrm{Edges not crossing the cut}| - |\textrm{Edges crossing the cut}|) + O(n^{-1}).
\end{align*}

On the other hand, by Proposition \ref{contProp}, this is at least $\frac{-1}{\pi^2}$. Thus
$$
|\textrm{Edges crossing the cut}|-|\textrm{Edges not crossing the cut}| \leq \frac{n^4}{8\pi^2} +O(n^3).
$$
Adding the number of edges of $G_n$ and dividing by $2$, we find that
$$
|\textrm{Edges crossing the cut}| \leq n^4\left(\frac{1}{16\pi^2}+\frac{1}{48} \right) +O(n^3).
$$
This provides an upper bound on the size of $\textrm{MAX-CUT}(G_n)$. Thus by Lemma \ref{maxcutreductionlem}, the crossing number of $K_n$ is at least
$$
n^4\left(\frac{1}{48} - \frac{1}{16\pi^2} \right) +O(n^3).
$$
This completes our proof.
\end{proof}

\section*{Acknowledgements}

I would like to thank John Mackey for suggesting this problem to me. This work was done while the author was supported by a NSF postdoctoral fellowship.

\end{document}